\definecolor{webred}{rgb}{0.75,0,0}
\definecolor{webgreen}{rgb}{0,0.75,0}
\definecolor{refkey}{gray}{0.75}
\numberwithin{equation}{section}
\newtheorem{theo}{Theorem}[section]
\newtheorem{lem}{Lemma}[section]
\newtheorem{Def}[theo]{Definition}
\theoremstyle{remark}
\newtheorem{rem}{Remark}[section]
\newcommand{\ep}{\varepsilon}
\def\R{{\mathbb{R}}}
\def\d{\displaystyle}
\def\e{{\varepsilon}}
\def\p{\partial}
\newcommand{\m}{-k}
\newcommand{\M}{-2k}
\newcommand{\D}{\frac{\mu}{t}}
\date{}
\subjclass[2010]{35L15, 35L71,  35B44}
\keywords{Blow-up, Einsten-de Sitter spacetime, Glassey exponent, Lifespan, Critical curve, Nonlinear wave equations,  Time-derivative nonlinearity.}
\begin{document}

\title[Blow-up and lifespan estimates for a damped wave equation in the Einstein - de Sitter spacetime with nonlinearity of derivative type]{Blow-up and lifespan estimates for a damped wave equation in the Einstein-de Sitter spacetime with nonlinearity of derivative type}
\author[M. Hamouda, M.A. Hamza and A. Palmieri]{Makram Hamouda$^{1}$, Mohamed Ali Hamza$^{1}$  and Alessandro Palmieri$^{2,3}$}
\address{$^{1}$ Department of Basic Sciences, Deanship of Preparatory Year and Supporting Studies, Imam Abdulrahman Bin Faisal University, P.O. Box 1982, 34212 Dammam, Saudi Arabia.}
\address{$^{2}$ Department of Mathematics, University of Pisa, Largo B. Pontecorvo 5, 56127 Pisa, Italy.}
\address{$^{3}$  {\it Current address:} Mathematical Institute, Tohoku University, Aoba, Sendai 980-8578, Japan.}

\medskip

\email{mmhamouda@iau.edu.sa (M. Hamouda)} 
\email{mahamza@iau.edu.sa (M.A. Hamza)}
\email{alessandro.palmieri.math@gmail.com (A. Palmieri)}

\pagestyle{plain}


\begin{abstract}
In this article, we investigate the blow-up for local solutions to a semilinear wave equation in the generalized Einstein - de Sitter spacetime with nonlinearity of derivative type. More precisely, we consider a semilinear damped wave equation with a time-dependent and not summable speed of propagation and with a time-dependent coefficient for the linear damping term with critical decay rate. We prove in this work that the results obtained in a previous work, where the damping coefficient takes two particular values $0$ or $2$, can be extended for any positive damping coefficient. We show the blow-up in finite time of local in time solutions and we establish upper bound estimates for the lifespan, provided that the exponent in the nonlinear term is below a suitable threshold and that the Cauchy data are nonnegative and compactly supported.

\end{abstract}

\maketitle


\section{Introduction}
\par\quad

We are interested in the   semilinear damped wave equation when the   speed of propagation is depending on time, namely the damped wave equations in  Einstein - de Sitter spacetime, with time derivative nonlinearity which reads as follows:
\begin{equation}
\label{T-sys}
\left\{
\begin{array}{l}
\d u_{tt}-t^{\M}\Delta u+\D  u_t=|u_t|^p, 
\quad \mbox{in}\ \R^N\times[1,\infty),\\
u(x,1)=\e f(x),\ u_t(x,1)=\e g(x), \quad  x\in\R^N,
\end{array}
\right.
\end{equation}
where $k\in [0,1)$, $\mu \ge 0$, $p>1$,  $\ N \ge 1$ is the space dimension, $\e>0$ is a parameter illustrating the size of the initial data, 
and  $f,g$ are supposed to be positive functions.
 Furthermore, we consider  $f$ and $g$ with compact support on   $B(0_{\R^N},R), R>0$.

The  problem \eqref{T-sys} with time derivative nonlinearity being replaced by power nonlinearity is well understood in terms of blow-up phenomenon. Let us first recall the equation in this case.  Under the usual Cauchy conditions,  the  semilinear wave equation with power nonlinearity is
\begin{equation}
\label{T-sys-ab}
\d u_{tt}-t^{\M}\Delta u+\D u_t=|u|^q, 
\quad \mbox{in}\ \R^N\times[1,\infty).
\end{equation}
The  blow-up phenomenon for \eqref{T-sys-ab} is  related to two particular exponents. The first exponent, $q_0(N,k)$,   is the positive root of  $$((1-k)N-1)q^2-((1-k)N+1+2k)q-2(1-k)=0,$$
and  the second exponent is given by 
$$q_1(N,k)=1+\frac{2}{N(1-k)}.$$
Hence,  the positive number $\d \max \left(q_0(N+\frac{\mu}{1-k},k),q_1(N,k)\right)$ seems to be a serious candidate for  the critical power stating thus the threshold between the global existence and the blow-up regions, see e.g. \cite{Galstian,Palmieri, Palmieri2,Tsutaya,Tsutaya2}.

Let us go back to \eqref{T-sys} with $k =\mu= 0$. This case  is in fact connected to the Glassey conjecture in which  the critical exponent $p_G$ is given by
\begin{equation}\label{Glassey}
p_G=p_G(N):=1+\frac{2}{N-1}.
\end{equation}
The above  value $p_G$ is creating a threshold (depending on $p$) between the region where we have  the global existence of small data solutions (for $p>p_G$) and another where the blow-up of the solutions under suitable
sign assumptions for the Cauchy data occurs (for $p \le p_G$); see e.g. \cite{Hidano1,Hidano2,John1,Sideris,Tzvetkov,Zhou1}.\\

Now,  for $k<0$ and $\mu=0$, it is proven in \cite{LP-tricomi} that the solution of \eqref{T-sys}, in the subcritical case ($1<p \le p_G(N(1-k))$), blows up in finite time  giving hence a lifespan estimate of the maximal existence time. This is equivalent to say that, for $1<p \le p_G(N(1-k))$, we have the nonexistence of the solution of \eqref{T-sys}. However, the aforementioned result was recently improved in \cite{Lai2020} thanks to the construction of adequate test functions. The new  region obtained in  \cite{Lai2020} gives a plausible characterization of the critical exponent, namely  
\begin{equation}\label{pT}
p \le p_T(N,k):=1+\frac{2}{(1-k)(N-1)+k}.
\end{equation}
Very recently, it is proved in \cite{Our5} with different approaches, 
as an application of the case of 
mixed nonlinearities, that results similar to the above for the problem 
\eqref{T-sys} with $k<0$ and $\mu=0$ hold. \\

We consider now the case $\mu > 0$ and $k=0$ in \eqref{T-sys-ab}. Hence, for a small  $\mu$,  the solution of \eqref{T-sys-ab} behaves  like a wave. In fact, the damping produces  a shifting by $\mu>0$ on the dimension $N$ for the value of the critical power, 
see e.g. \cite{Ikeda, Palmieri-Reissig,  Tu-Lin1, Tu-Lin}, and \cite{Dab1,Dab2} for the case 
 $\mu=2$ and $N=2,3$. The global existence for $\mu=2$ is proven in \cite{Dab1, Dab2, Palmieri2-2019}. 
 However, for $\mu$  large,  the equation \eqref{T-sys-ab} is of a parabolic type and the behavior is like a heat-type equation; see e.g. \cite{dabbicco1,dabbicco2,wakasugi}.

On the other hand, for the solution of \eqref{T-sys} with  $\mu > 0$ and $k=0$,  in \cite{LT2} a blow-up result is proved for $1<p\leq p_G(N+2\mu)$ and upper bound estimates for the lifespan are given as well. Later,  this result was improved in \cite{Palmieri-Tu-2019}, where $p_G(N+ \mu)$ is found as upper bound for $\mu \ge 2$.  Recently,  an improvement is obtained in \cite{Our2} stating that the critical value for $p$ is given by $p_G(N+ \mu)$ for all $\mu >0$. This should be the optimal threshold value that needs to be rigorously proved by completing the present blow-up result with a global existence one when the exponent $p$ is beyond the critical value.  \\

We focus in this article on the blow-up  of the solution of (\ref{T-sys}) for $k\in [0,1)$.   Our target is to give the upper bound, denoted here by $p_{E}=p_{E}(N,k,\mu)$, delimiting a new blow-up region for the Einstein - de Sitter spacetime equation   \eqref{T-sys}. 

First, as observed for the equation \eqref{T-sys-ab},  where the damping produces a shift in $q_0$ in the dimensional parameter of magnitude $\d \frac{\mu}{1-k}$,  we expect that the same phenomenon holds for \eqref{T-sys}.  In other words, we predict that the upper bound $p_{E}=p_{E}(N,k,\mu)$ satisfies
 \begin{equation}\label{pEpE}
 p_{E}(N,k,\mu)=p_E(N+\frac{\mu}{1-k},k,0).
 \end{equation}
 
Using an explicit representation formula and Zhou's approach to proving the blow-up on a certain characteristic line, in \cite{HHP1}, we proved that
\begin{equation}\label{pEpT}
p_{E}(N,k,0)=p_T(N,k),
  \end{equation}
  where $p_T$ is defined by \eqref{pT}.\\
 Now, in view of \eqref{pEpE} and \eqref{pEpT}, we await, for the solution of  \eqref{T-sys} with $k \in [0,1)$ and $\mu>0$, that
  \begin{equation}\label{critical-exp}
 p_{E}=p_{E}(N,k,\mu):=1+\frac{2}{(1-k)(N-1)+k+\mu}.
 \end{equation}

As we have mentioned, in  \cite{HHP1} we proved that \eqref{critical-exp} holds true under some sign assumptions for the data for $\mu=0$, but also for $\mu=2$ (cf. Theorems 1.1 and 1.2). We aim in the present work to extend this result for all $\mu>0$, and show that the upper bound value for $p$ is in fact given by \eqref{critical-exp}. We think that $p_{E}(N,k,\mu)$, for $k$ small, characterizes the limiting value between the existence and nonexistence regions of the solution of \eqref{T-sys}. However, it is clear that this limiting exponent does not reach the optimal one in view of the very recent results in \cite{Tsutaya3}.


Finally, we recall here that the wave in (\ref{T-sys}) has a  speed of propagation dependent of time. Therefore,   this time-dependent speed of propagation term can be seen, after rescaling (see \eqref{eq-v} below), as a scale-invariant damping. Let $v(x,\tau)=u(x,t)$, where 
\begin{equation}\label{xi}
\tau=\phi_k(t):=\frac{t^{1-k}}{1-k}.
\end{equation} 
Hence, we can easily see that $v(x,\tau)$ satisfies the following equation:
\begin{align} \label{eq-v}
v_{\tau\tau}-\Delta v+ \frac{\mu-k}{(1-k)\tau} \,\partial_\tau v = C_{k,p} \tau^{\mu_k (p-2)} |\partial_{\tau} v|^p, 
\quad \mbox{in}\ \R^N\times[1/(1-k),\infty),
\end{align} 
where $\mu_k := \frac{\m}{1-k}$ and $C_{k,p}= (1-k)^{\mu_k(p-2)}$.  Moreover, thanks to the above transformation, we can use the methods carried out in some earlier works \cite{CLP,Our,Our2, Our3, Our5} to build the proof of our main result.
\\

The rest of the paper is arranged as follows. First, we state in  Section \ref{sec-main}  the weak formulation of (\ref{T-sys}) in the energy space, and then we give the main theorem.  Section \ref{aux} is concerned with  some technical lemmas that we will use to prove the main result. Finally, Section \ref{sec-ut} is assigned  to the proof of Theorem \ref{th_u_t} which constitutes the main result of this article.

\section{Nonexistence Result}\label{sec-main}
\par

First, we define in the sequel the energy solution associated with (\ref{T-sys}).
\begin{Def}\label{def1}
 Let $f\in H^{1}(\R^N)$ and $g \in L^{2}(\R^N)$. The function $u$ is said to be an energy  solution of
 (\ref{T-sys}) on $[1,T)$
if
\begin{displaymath}
\left\{
\begin{array}{l}
u\in \mathcal{C}([1,T),H^1(\R^N))\cap \mathcal{C}^1([1,T),L^2(\R^N)), \vspace{.1cm}\\
 \ u_t \in L^p_{loc}((1,T)\times \R^N),
 \end{array}
  \right.
\end{displaymath}
satisfies, for all $\Phi\in \mathcal{C}_0^{\infty}(\R^N\times[1,T))$ and all $t\in[1,T)$, the following equation:
\begin{equation}
\label{energysol2-21}
\begin{array}{l}
\d\int_{\R^N}u_t(x,t)\Phi(x,t)dx-\e \int_{\R^N}g(x)\Phi(x,1)dx \vspace{.2cm}\\
\d -\int_1^t  \int_{\R^N}u_t(x,s)\Phi_t(x,s)dx \,ds+\int_1^t s^{\M} \int_{\R^N}\nabla u(x,s)\cdot\nabla\Phi(x,s) dx \,ds\vspace{.2cm}\\
\d  +\int_1^t  \int_{\R^N}\frac{\mu}{s}u_t(x,s) \Phi(x,s)dx \,ds=\int_1^t \int_{\R^N}|u_t(x,s)|^p\Phi(x,s)dx \,ds,
\end{array}
\end{equation}
 and the condition $u(x,1)=\varepsilon f(x)$ is fulfilled in $H^1(\mathbb{R}^N)$.\\
A straightforward computation shows that \eqref{energysol2-21} is equivalent to
\begin{equation}
\begin{array}{l}\label{energysol2}
\d \int_{\R^N}\big[u_t(x,t)\Phi(x,t)- u(x,t)\Phi_t(x,t)+\frac{\mu}{t}u(x,t) \Phi(x,t)\big] dx \vspace{.2cm}\\
\d \int_1^t  \int_{\R^N}u(x,s)\left[\Phi_{tt}(x,s)-s^{-2k}\Delta \Phi(x,s) -\frac{\partial}{\partial s}\left(\frac{\mu}{s}\Phi(x,s)\right)\right]dx \,ds\vspace{.2cm}\\
\d =\int_{1}^{t}\int_{\R^N}|u_t(x,s)|^p\psi(x,s)dx \, ds + \e \int_{\R^N}\big[-f(x)\Phi_t(x,1)+\left(\mu f(x)+g(x)\right)\Phi(x,1)\big]dx.
\end{array}
\end{equation}
\end{Def}

\begin{rem}\label{rem-supp}
Obviously, we can choose a test function $\Phi$  which is not compactly supported in view of the fact that the initial data $f$ and $g$ are supported on $B_{\R^N}(0,R)$. In fact, we have $\mbox{\rm supp}(u)\ \subset\{(x,t)\in\R^N\times[1,\infty): |x|\le \phi_k(t)+R\}$.
\end{rem}

The blow-up region and the lifespan estimate of the solutions of  (\ref{T-sys}) constitute the objective of our main result which is the subject of the following theorem.
\begin{theo}
\label{th_u_t}
Let $\mu >0$, $p \in (1, p_{E}(N,k,\mu)], N \ge 1$ and $k \in [0,1)$. Suppose that $f\in H^{1}(\R^N)$ and $g \in L^{2}(\R^N)$ are  functions which are non-negative, with compact support on  $B(0_{\R^N},R)$,
and  non-vanishing everywhere. Then,  there exists $\e_0=\e_0(f,g,N,R,p,k,\mu)>0$ such that for any $0<\e\le\e_0$ the solution $u$  to \eqref{T-sys} which satisfies 
$$\mbox{\rm supp}(u)\ \subset\{(x,t)\in\R^N\times[1,\infty): |x|\le \phi_k(t)+R\},$$
blows up in finite time $T_\e$, and 
\begin{displaymath}
T_\e \leq
\d \left\{
\begin{array}{ll}
 C \, \e^{-\frac{2(p-1)}{2-((1-k)(N-1)+k+\mu)(p-1)}}
 &
 \ \text{for} \
 1<p<p_{E}(N,k,\mu), \vspace{.1cm}
 \\
 \exp\left(C\e^{-(p-1)}\right)
&
 \ \text{for} \ p=p_{E}(N,k,\mu),
\end{array}
\right.
\end{displaymath}
 where $p_{E}(N,k,\mu)$ is given by \eqref{critical-exp} and $C$ is a positive constant independent of $\e$.
\end{theo}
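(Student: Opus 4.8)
The plan is to adapt the test-function method for derivative-type nonlinearities, exploiting the change of variables $\tau=\phi_k(t)$ that recasts \eqref{T-sys} as the scale-invariant damped problem \eqref{eq-v}. The central object is a positive test function $\w(x,t)$ built as a product $\lambda(t)\varphi(x)$, where $\varphi(x)=\int_{S^{N-1}}e^{x\cdot\omega}\,d\omega$ is the classical eigenfunction satisfying $\Delta\varphi=\varphi$ with asymptotics $\varphi(x)\sim c_N|x|^{-(N-1)/2}e^{|x|}$, and $\lambda(t)$ is a decaying solution of the ODE $\lambda''-t^{-2k}\lambda-\partial_t(\frac{\mu}{t}\lambda)=0$ obtained from the time-dependent part of the adjoint linear operator in \eqref{energysol2} (expressible, after the transformation, through modified Bessel functions in $\tau$). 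The factor $\lambda(t)$ is chosen so that $\w$ annihilates exactly the operator appearing against $u$ in the weak formulation \eqref{energysol2}.

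First I would introduce the functional $G(t):=\int_{\R^N}u_t(x,t)\,\w(x,t)\,dx$, together with the companion quantity $\int_{\R^N}u(x,t)\,\w(x,t)\,dx$, and use \eqref{energysol2-21}–\eqref{energysol2} to express the time evolution of $G$. The sign assumptions on $f,g$ (nonnegative, nontrivial) together with the nonnegativity of $|u_t|^p$ then yield a lower bound $G(t)\gtrsim\varepsilon$ valid once $t$ is bounded away from $1$; this is the step where the precise decay of $\lambda(t)$ and the positivity of $\varphi$ guarantee that the boundary contributions coming from the data in \eqref{energysol2} carry the correct sign.

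Next I would close the estimate. Applying Hölder's inequality on the support $\{|x|\le\phi_k(t)+R\}$ (Remark \ref{rem-supp}), I would bound $\int u_t\,\w$ in terms of $\big(\int|u_t|^p\w\big)^{1/p}$ times the weight $\big(\int_{|x|\le\phi_k(t)+R}\w^{p'}\big)^{1/p'}$, and then evaluate $\int_{|x|\le\phi_k(t)+R}\w^{p'}\,dx=\lambda(t)^{p'}\int_{|x|\le\phi_k(t)+R}\varphi(x)^{p'}\,dx$ asymptotically: the exponential growth $e^{p'|x|}$ of $\varphi^{p'}$ over the ball of radius $\sim\phi_k(t)\sim t^{1-k}$ cancels against the exponential decay of $\lambda(t)^{p'}$, leaving a pure power of $t$. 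Feeding this into the evolution identity for $G$ produces a closed integral inequality, morally of the form $G(t)\ge C\int_{t_0}^t t^{-\beta}G(s)^p\,ds$ for an explicit $\beta=\beta(N,k,\mu,p)$. A Kato-type comparison argument then forces $G$ to blow up in finite time when $p<p_{E}$, with the stated polynomial lifespan; in the critical case $p=p_{E}$ the exponent $\beta$ sits exactly at the borderline, so a logarithmic slicing of the $t$-interval is required to extract the exponential bound $T_\varepsilon\le\exp(C\varepsilon^{-(p-1)})$.

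The hard part will be the construction of $\lambda(t)$ and the sharp evaluation of the weight integral $\int_{|x|\le\phi_k(t)+R}\w^{p'}\,dx$: it is precisely the competition between the time-dependent speed $t^{-2k}$ (entering through $\phi_k(t)\sim t^{1-k}$), the scale-invariant damping $\mu/t$, and the derivative nonlinearity that fixes the exponent $\beta$, and hence pins the threshold to $p_{E}(N,k,\mu)=1+\frac{2}{(1-k)(N-1)+k+\mu}$ together with the two lifespan rates. Making the constant $(1-k)(N-1)+k+\mu$ emerge correctly from these asymptotics — and ensuring the borderline slicing argument closes in the critical case rather than a one-shot ODE comparison — is the delicate technical core of the argument.
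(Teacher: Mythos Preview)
Your strategy coincides with the paper's: the test function is $\psi(x,t)=\rho(t)\varphi(x)$ with $\rho$ the Bessel-type solution of the adjoint ODE, one tracks $\mathcal{V}(t)=\int u_t\psi$ and $\mathcal{U}(t)=\int u\psi$, proves $\mathcal{V}(t)\gtrsim\varepsilon$ for large $t$, and closes via H\"older plus an ODE comparison. Two points, however, need correcting.

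First, the H\"older splitting is wrong as written. Writing $u_t\psi=(u_t\psi^{1/p})\psi^{1/p'}$ gives
\[
\int |u_t|^p\psi \;\ge\; \mathcal{V}(t)^p\Big(\int_{|x|\le\phi_k(t)+R}\psi\,dx\Big)^{-(p-1)},
\]
with $\psi$ to the \emph{first} power in the weight, not $\psi^{p'}$. Your version (pairing $(\int|u_t|^p\psi)^{1/p}$ with $(\int\psi^{p'})^{1/p'}$) is not a valid H\"older inequality for the weighted nonlinearity and, if one evaluates it anyway via Lemma~\ref{lem1}, produces the exponent $\tfrac{1}{2}\big[p(k+\mu)+(1-k)(N-1)(p-2)\big]$ instead of the correct $\tfrac{p-1}{2}\big[(1-k)(N-1)+k+\mu\big]$; the threshold $p_E$ would then be missed.

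Second, ``feeding this into the evolution identity for $G$'' hides a genuine difficulty. The identity one obtains from \eqref{energysol2} is
\[
\mathcal{V}'(t)+\Big[\tfrac{\mu}{t}-\tfrac{\rho'(t)}{\rho(t)}\Big]\mathcal{V}(t)=t^{-2k}\mathcal{U}(t)+\int|u_t|^p\psi,
\]
and integrating with the natural factor $t^{\mu}/\rho(t)$ leaves an exponentially weighted kernel, not the pure power $s^{-\beta}$ you need. The paper resolves this by forming an $\alpha$-linear combination (for $\alpha\in(1/7,1/4)$) of this identity with the integrated relation \eqref{eq5bis}, and then comparing $\mathcal{V}$ not with itself but with an auxiliary functional $H(t)=C_2\varepsilon+\tfrac{1}{16}\int_{\tilde T_3}^t\!\int|u_t|^p\psi$: one first shows $\mathcal{V}\ge H$ via a differential inequality for $\mathcal{V}-H$, and only then obtains the clean Riccati inequality $H'(t)\ge C\,t^{-[(1-k)(N-1)+k+\mu](p-1)/2}H(t)^p$, from which both the subcritical and the critical lifespan bounds follow directly. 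Your sketch should incorporate this intermediate comparison step.
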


\begin{rem}
The results stated in Theorem \ref{th_u_t} hold true for $k<0$ and $\mu>0$; see \cite{Our6} where a more general model with mass term is studied. 
\end{rem}

\begin{rem} After completing the first version of the present manuscript, we received a draft version of \cite{Tsutaya3}, where problem \eqref{T-sys} is studied, among other things. In particular, for $\frac{n+1}{n+2}<k<1$ and $\mu\in [0,(n+2)k-(n+1))$ the upper bound for $p$ in the blow-up result is improved in \cite{Tsutaya3} by proving the nonexistence of global solutions to \eqref{T-sys} for $1<p<1+\frac{1}{(1-k)n+\mu}$.
\end{rem}

\section{Auxiliary results}\label{aux}
\par

It is worth mentioning that the choice of the test function, that we will use in the functionals that will be  introduced later  on, is crucial here. Naturally, in terms of dynamics of the solution of \eqref{T-sys},  the more accurate the choice of the test function is, the better lifespan estimate we obtain. This is why we choose in the following to include all the linear terms inherited from \eqref{T-sys}. First, we introduce  the function $\rho(t)$ \cite{Palmieri}  given by
\begin{equation}\label{lmabdaK}
\rho(t):=\d t^{\frac{1+\mu}{2}}K_{\frac{\mu-1}{2(1-k)}}\left(\frac{t^{1-k}}{1-k}\right), \quad \forall \ t\ge 1,
\end{equation}
where $K_{\nu}(t)$ is the modified Bessel function of  second kind defined as
\begin{equation}\label{bessel}
K_{\nu}(t)=\int_0^\infty\exp(-t\cosh \zeta)\cosh(\nu \zeta)d\zeta,\ \nu\in \mathbb{R}.
\end{equation}
It is easy to see that $\rho(t)$ satisfies
\begin{equation}\label{lambda}
\d \frac{d^2 \rho(t)}{dt^2}-t^{\M}\rho(t)-\frac{d}{dt}\left(\frac{\mu}{t}\rho(t)\right)=0, \quad \forall \ t\ge 1.
\end{equation}
Second, we define the function $\varphi(x)$ by
\begin{equation}
\label{test11}
\varphi(x):=
\left\{
\begin{array}{ll}
\d\int_{S^{N-1}}e^{x\cdot\omega}d\omega & \mbox{for}\ N\ge2,\vspace{.2cm}\\
e^x+e^{-x} & \mbox{for}\  N=1;
\end{array}
\right.
\end{equation}
note that $\varphi(x)$ is introduced in \cite{YZ06} and satisfies $\Delta\varphi=\varphi$.\\
Hence,  the function $\psi(x,t):=\varphi(x)  \rho(t)$ verifies the following equation:
\begin{equation}\label{lambda-eq}
\partial^2_t \psi(x, t)-t^{\M}\Delta \psi(x, t) -\frac{\p}{\p t}\left(\frac{\mu}{t}\psi(x, t)\right)=0.
\end{equation}

In the following we enumerate some properties of the function $\rho(t)$ that we will use later on in the proof of our main result. 

\begin{lem}\label{lem-supp}
The next properties hold true for the function $\rho(t)$.
\begin{itemize}
\item[{\bf (i)}] The function $\rho(t)$ is positive  on $[1,\infty)$. Moreover,  for all $t \ge 1$, there exists a constant $C_1$ such that   $\rho(t)$ satisfies 
\begin{equation}\label{est-rho}
C_1^{-1}t^{\frac{k+\mu}{2}} \exp(-\phi_k(t)) \le  \rho(t) \le C_1 t^{\frac{k+\mu}{2}} \exp(-\phi_k(t)), 
\end{equation}
where $\phi_k(t)$ is given by \eqref{xi}.
\item[{\bf (ii)}] We have
\begin{equation}\label{lambda'lambda1}
\d \lim_{t \to +\infty} \left(\frac{t^k \rho'(t)}{\rho(t)}\right)=-1.
\end{equation}
\end{itemize}
\end{lem}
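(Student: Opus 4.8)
The plan is to reduce both statements to the classical large-argument asymptotics of the modified Bessel function $K_\nu$. Throughout, I would write $z = \phi_k(t) = \frac{t^{1-k}}{1-k}$ and $\nu = \frac{\mu-1}{2(1-k)}$, so that $\rho(t) = t^{\frac{1+\mu}{2}}K_\nu(z)$. Since $k\in[0,1)$, the map $t\mapsto z$ is an increasing bijection from $[1,\infty)$ onto $[\frac{1}{1-k},\infty)$; in particular $z$ stays bounded away from $0$ for $t\ge 1$ and $z\to\infty$ as $t\to\infty$. The two ingredients I would invoke are: first, the positivity $K_\nu(z)>0$ for $z>0$, read off directly from the integral representation \eqref{bessel} since the integrand there is positive; and second, the asymptotic expansion $K_\nu(z) = \sqrt{\frac{\pi}{2z}}\,e^{-z}\bigl(1 + O(z^{-1})\bigr)$ as $z\to\infty$, valid for every fixed real $\nu$.

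For part {\bf (i)}, positivity of $\rho$ is immediate from the positivity of $K_\nu$ together with $t^{\frac{1+\mu}{2}}>0$. For the two-sided bound, I would substitute the asymptotic into $\rho(t)=t^{\frac{1+\mu}{2}}K_\nu(z)$ and track the power of $t$: the factor $z^{-1/2}$ contributes $\sqrt{1-k}\,t^{-\frac{1-k}{2}}$, so the exponent of $t$ becomes $\frac{1+\mu}{2}-\frac{1-k}{2}=\frac{k+\mu}{2}$, while the exponential factor is exactly $e^{-z}=\exp(-\phi_k(t))$. This produces a threshold $T_0$ and a constant for which the claimed inequality holds for all $t\ge T_0$. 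On the compact interval $[1,T_0]$ the quotient $\rho(t)\big/\bigl(t^{\frac{k+\mu}{2}}\exp(-\phi_k(t))\bigr)$ is continuous and strictly positive (by positivity of $\rho$), hence bounded above and below by positive constants; taking the worst constants over the two regimes yields a single $C_1$ valid on all of $[1,\infty)$.

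For part {\bf (ii)}, I would differentiate $\rho(t)=t^{\frac{1+\mu}{2}}K_\nu(z)$ using $\frac{dz}{dt}=t^{-k}$, which gives
\[
\frac{t^k\rho'(t)}{\rho(t)} = \frac{1+\mu}{2}\,t^{k-1} + \frac{K_\nu'(z)}{K_\nu(z)}.
\]
The first term tends to $0$ as $t\to\infty$ because $k<1$. For the second, I would use the recurrence $K_\nu'(z)=-\tfrac{1}{2}\bigl(K_{\nu-1}(z)+K_{\nu+1}(z)\bigr)$ together with the fact that, by the leading term of the asymptotic, the ratios $K_{\nu-1}(z)/K_\nu(z)$ and $K_{\nu+1}(z)/K_\nu(z)$ both converge to $1$ as $z\to\infty$; hence $K_\nu'(z)/K_\nu(z)\to -1$. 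Since $z\to\infty$ as $t\to\infty$, combining the two limits yields \eqref{lambda'lambda1}.

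Once the Bessel asymptotics are in hand the computations are essentially bookkeeping, so I do not expect a genuine obstacle. The one point requiring care is producing a single constant $C_1$ uniform over all $t\ge 1$ in {\bf (i)}: the asymptotics only control the tail $t\ge T_0$, and one must separately exploit continuity and strict positivity of $\rho$ on the compact piece $[1,T_0]$ to close the estimate. A secondary point is that the exact form and sign conventions of the Bessel recurrence and of the leading asymptotic $K_\nu(z)\sim\sqrt{\pi/(2z)}\,e^{-z}$ must be quoted correctly from a standard reference, since the whole argument rests on that leading behaviour.
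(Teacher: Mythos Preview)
Your proposal is correct and follows essentially the same route as the paper: positivity from the integral representation \eqref{bessel}, the two-sided bound from the large-argument asymptotic $K_\nu(z)=\sqrt{\pi/(2z)}\,e^{-z}(1+O(z^{-1}))$, and the limit in {\bf (ii)} from a Bessel recurrence combined with that same asymptotic. The only cosmetic differences are that the paper uses the recurrence $K_\nu'(z)=-K_{\nu+1}(z)+\frac{\nu}{z}K_\nu(z)$ (yielding $\frac{\rho'(t)}{\rho(t)}=\frac{\mu}{t}-t^{-k}\frac{K_{\nu+1}(\phi_k(t))}{K_\nu(\phi_k(t))}$) rather than your $K_\nu'=-\tfrac12(K_{\nu-1}+K_{\nu+1})$, and that you are more explicit than the paper about patching the compact interval $[1,T_0]$ via continuity and strict positivity to obtain a single constant $C_1$ valid on all of $[1,\infty)$.
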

\begin{proof}
First, we recall here the definition of  $\rho(t)$, as in \eqref{lmabdaK}, and \eqref{xi}
\begin{equation}\label{lmabdaK1}
\rho(t)=\d t^{\frac{1+\mu}{2}}K_{\frac{\mu-1}{2(1-k)}}\left(\phi_k(t)\right), \quad \forall \ t\ge 1.
\end{equation}
Hence, the positivity of $\rho(t)$ is straightforward thanks to \eqref{bessel}.
On the other hand, from \cite{Gaunt}, we have the following property for the function $K_{\mu}(t)$:
\begin{equation}\label{Kmu}
K_{\mu}(t)=\sqrt{\frac{\pi}{2t}}e^{-t} (1+O(t^{-1})), \quad \text{as} \ t \to \infty.
\end{equation}
Combining \eqref{lmabdaK1} and \eqref{Kmu}, and again remembering the definition of $\phi_k(t)$, given by \eqref{xi}, and the fact that $k <1$, we conclude \eqref{est-rho}. The  assertion {\bf (i)} is thus proven.

Now, to prove {\bf (ii)}, using \eqref{lmabdaK1} we observe that 
\begin{equation}\label{lambda'lambda}
\d \frac{\rho'(t)}{\rho(t)}=\frac{\mu+1}{2t}+t^{-k}\frac{K'_{\frac{\mu-1}{2(1-k)}}\left(\phi_k(t)\right)}{K_{\frac{\mu-1}{2(1-k)}}\left(\phi_k(t)\right)}.
\end{equation}
Exploiting the well-known identity for the modified Bessel function, 
\begin{equation}\label{Knu-pp}
\frac{d}{dz}K_{\nu}(z)=-K_{\nu+1}(z)+\frac{\nu}{z}K_{\nu}(z),
\end{equation}
and combining \eqref{lambda'lambda} and \eqref{Knu-pp} yields
\begin{equation}\label{lambda'lambda2}
\d \frac{\rho'(t)}{\rho(t)}=\frac{\mu}{t}-t^{-k}\frac{K_{1+\frac{\mu-1}{2(1-k)}}\left(\phi_k(t)\right)}{K_{\frac{\mu-1}{2(1-k)}}\left(\phi_k(t)\right)}.
\end{equation}
From \eqref{Kmu} and \eqref{lambda'lambda2}, and using the fact that $k \in [0,1)$, we deduce \eqref{lambda'lambda1}.

This ends the proof of Lemma \ref{lem-supp}.
\end{proof}


Throughout this article, the use of a generic parameter $C$ is designed to denote a positive constant that might be dependent on $p,q,k,N,R,f,g, \mu$ but independent of $\ep$. The  value of the constant $C$ may change from line to line. Nevertheless,  when it is necessary, we will clearly mention the expression of $C$ in terms of the parameters involved in our problem.\\

A classical estimate result for the function $\psi(x, t)$ is stated in the next lemma.
\begin{lem}[\cite{YZ06}]
\label{lem1} Let  $r>1$. Then, there exists a constant $C=C(N,\mu,R,p,k,r)>0$ such that
\begin{equation}
\label{psi}
\int_{|x|\leq \phi_k(t)+R}\Big(\psi(x,t)\Big)^{r}dx
\leq C\rho^r(t)e^{r\phi_k(t)}(1+\phi_k(t))^{\frac{(2-r)(N-1)}{2}},
\quad\forall \ t\ge 1.
\end{equation}
\end{lem}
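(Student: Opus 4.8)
The plan is to exploit the product structure $\psi(x,t)=\varphi(x)\rho(t)$ recorded just before Lemma \ref{lem1}, which factors the left-hand side of \eqref{psi} as
\[
\int_{|x|\le \phi_k(t)+R}\big(\psi(x,t)\big)^r\,dx
= \rho^r(t)\int_{|x|\le \phi_k(t)+R}\big(\varphi(x)\big)^r\,dx .
\]
Hence the whole statement reduces to the purely spatial estimate
\[
\int_{|x|\le L}\big(\varphi(x)\big)^r\,dx \le C\, e^{rL}(1+L)^{\frac{(2-r)(N-1)}{2}},
\qquad L:=\phi_k(t)+R .
\]
The key input is the asymptotic behaviour of $\varphi$. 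For $N\ge 2$ the function $\varphi$ is radial and, up to a dimensional constant, equals $|x|^{-\frac{N-2}{2}}I_{\frac{N-2}{2}}(|x|)$ with $I_\nu$ the modified Bessel function of the first kind; the classical expansion $I_\nu(s)=\frac{e^s}{\sqrt{2\pi s}}(1+O(s^{-1}))$ as $s\to\infty$, together with the continuity and positivity of $\varphi$ on compact sets, gives the global pointwise bound $\varphi(x)\le C\,(1+|x|)^{-\frac{N-1}{2}}e^{|x|}$, sharp at infinity. For $N=1$ this is simply $\varphi(x)=2\cosh x\le Ce^{|x|}$, consistent with the vanishing exponent $(N-1)/2=0$.

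First I would pass to polar coordinates. Writing $dx=s^{N-1}\,ds\,d\omega$ and inserting the pointwise bound for $\varphi^r$, the spatial integral is controlled by
\[
\int_{|x|\le L}\big(\varphi(x)\big)^r\,dx
\le C\int_0^{L}(1+s)^{-\frac{r(N-1)}{2}}e^{rs}\,s^{N-1}\,ds
\le C\int_0^L (1+s)^{\frac{(2-r)(N-1)}{2}}e^{rs}\,ds ,
\]
where in the last step I bounded $s^{N-1}\le(1+s)^{N-1}$ and collected the powers of $(1+s)$, the exponent becoming exactly $(N-1)-\frac{r(N-1)}{2}=\frac{(2-r)(N-1)}{2}=:\alpha$.

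The remaining one-dimensional integral is of Laplace type: the growing factor $e^{rs}$ forces the mass to concentrate at the endpoint $s=L$. The plan is to prove
\[
\int_0^L (1+s)^\alpha e^{rs}\,ds\le C\,(1+L)^\alpha e^{rL}
\]
uniformly in $L\ge 0$. When $\alpha\ge 0$ this follows from a single integration by parts, the boundary term at $s=L$ being exactly $\frac1r(1+L)^\alpha e^{rL}$ while the remaining negative contributions are discarded; when $\alpha<0$ I would instead split the domain at $s=L/2$, bounding the part over $[0,L/2]$ by $\frac1r e^{rL/2}$ (negligible against $(1+L)^\alpha e^{rL}$) and the part over $[L/2,L]$ by $(1+L/2)^\alpha\frac1r e^{rL}\le C(1+L)^\alpha e^{rL}$. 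Combining this with the factorisation, and using $e^{rL}=e^{rR}e^{r\phi_k(t)}$ together with $(1+\phi_k(t)+R)^\alpha\le C(1+\phi_k(t))^\alpha$ (valid for either sign of $\alpha$ since $\phi_k(t)\ge\frac1{1-k}>0$ keeps both quantities comparable and bounded below), yields \eqref{psi} after absorbing $e^{rR}$ and the dimensional constants into $C$.

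The genuinely substantive step is pinning down the exact exponent $-\frac{N-1}{2}$ in the asymptotics of $\varphi$, since any mismatch there propagates directly into the polynomial correction $(1+\phi_k(t))^{\frac{(2-r)(N-1)}{2}}$, which is precisely the gain used later to sharpen the lifespan estimates; the Bessel asymptotics must therefore be invoked with the correct power and upgraded to a uniform pointwise bound valid for all $x$, not only for large $|x|$. The secondary difficulty is the uniformity of the Laplace estimate in $t\ge 1$: one needs a single constant covering both the regime $L\to\infty$ (endpoint concentration) and the regime where $L$ stays near $\frac1{1-k}+R$ (a bounded region, handled by continuity), and the sign change of $\alpha$ at $r=2$ requires the two-case argument above rather than a single crude bound.
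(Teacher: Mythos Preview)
Your argument is correct and is essentially the standard Yordanov--Zhang proof: factor out $\rho^r(t)$, use the sharp pointwise bound $\varphi(x)\le C(1+|x|)^{-(N-1)/2}e^{|x|}$ coming from the Bessel asymptotics, pass to polar coordinates, and then handle the resulting one-dimensional Laplace integral by endpoint concentration. The paper itself does not give a proof of this lemma; it simply cites \cite{YZ06} and states the estimate, so there is no independent argument to compare against. Your treatment of the sign change of $\alpha=\frac{(2-r)(N-1)}{2}$ at $r=2$ via a split at $s=L/2$ is a clean way to make the Laplace step uniform, and your remark that $\phi_k(t)\ge\frac{1}{1-k}$ keeps $(1+\phi_k(t)+R)$ and $(1+\phi_k(t))$ comparable is exactly what is needed to absorb the shift by $R$ into the constant.
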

\par
Let $u$ be a solution to \eqref{T-sys} for which we introduce the following functionals:
\begin{equation}
\label{F1def}
\mathcal{U}(t):=\int_{\R^N}u(x, t)\psi(x, t)dx,
\end{equation}
and
\begin{equation}
\label{F2def}
\mathcal{V}(t):=\int_{\R^N}u_t(x,t)\psi(x, t)dx.
\end{equation}
The first  lower bounds for $\mathcal{U}(t)$ and $\mathcal{V}(t)$ are respectively given by the following two lemmas where, for $t$ large enough, we will prove that $\e^{-1} t^{-k}\mathcal{U}(t)$ and $\e^{-1}\mathcal{V}(t)$ are two bounded from below functions by  positive constants. 
\begin{lem}
\label{F1}
Let $u$ be a solution of  \eqref{T-sys}. Assume in addition that the corresponding initial data satisfy the assumptions as in Theorem \ref{th_u_t}. Then, there exists $T_0=T_0(k,\mu)>2$ such that 
\begin{equation}
\label{F1postive}
\mathcal{U}(t)\ge C_{\mathcal{U}}\, \e \, t^{k}, 
\quad\text{for all}\ t \ge T_0,
\end{equation}
where $C_{\mathcal{U}}$ is a positive constant that may depend on $f$, $g$, $N,\mu,R$ and $k$, but not on $\e$.
\end{lem}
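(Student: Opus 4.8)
The plan is to test the weak formulation \eqref{energysol2} with the separated function $\Phi=\psi=\varphi\rho$ constructed in \eqref{lambda-eq}. Since $\psi$ solves the homogeneous equation \eqref{lambda-eq}, the whole space–time double integral drops out, and because $u$ has the finite-propagation support recalled in Remark \ref{rem-supp}, the non-compactness of $\psi$ causes no trouble. What remains is the identity
\[
\mathcal{V}(t)-\int_{\R^N}u\,\psi_t\,dx+\frac{\mu}{t}\,\mathcal{U}(t)=\int_1^t\int_{\R^N}|u_t|^p\psi\,dx\,ds+\e\, C_0,
\]
where $C_0:=\int_{\R^N}\varphi\big[(\mu f+g)\rho(1)-f\rho'(1)\big]dx$ collects the Cauchy data.

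Next I would turn this into a single first-order ODE for $\mathcal{U}$. Writing $\psi=\varphi\rho$ and using $\Delta\varphi=\varphi$ gives $\int u\,\psi_t\,dx=\frac{\rho'}{\rho}\mathcal{U}(t)$, while differentiating \eqref{F1def} (legitimate since $u\in\mathcal{C}^1([1,T),L^2(\R^N))$) yields $\mathcal{V}(t)=\mathcal{U}'(t)-\frac{\rho'}{\rho}\mathcal{U}(t)$. Substituting both relations collapses the identity to
\[
\mathcal{U}'(t)+\left(\frac{\mu}{t}-2\,\frac{\rho'(t)}{\rho(t)}\right)\mathcal{U}(t)=\int_1^t\int_{\R^N}|u_t|^p\psi\,dx\,ds+\e\,C_0\ \ge\ \e\,C_0,
\]
the inequality holding because $\psi>0$ makes the nonlinear memory term nonnegative. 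The positivity $C_0>0$ is where the sign hypotheses on $f,g$ enter, combined with the Bessel identity \eqref{Knu-pp}: one checks that $\mu\rho(1)-\rho'(1)=\rho(1)\,K_{\nu+1}(\phi_k(1))/K_\nu(\phi_k(1))>0$ with $\nu=\frac{\mu-1}{2(1-k)}$, so both $\int\varphi g\,dx$ and $\int\varphi f\,dx$ appear in $C_0$ with strictly positive weights.

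Finally I would integrate this differential inequality with the integrating factor $m(t)=t^{\mu}\rho(1)^2/\rho(t)^2$, obtained from $\exp\int_1^t\big(\frac{\mu}{s}-2\frac{\rho'}{\rho}\big)ds$. Since $\mathcal{U}(1)=\e\rho(1)\int f\varphi\,dx\ge 0$ and $m(1)=1$, this produces $\mathcal{U}(t)\ge \e\,C_0\,m(t)^{-1}\int_1^t m(s)\,ds$. The decisive input is the asymptotics of $m$: the two-sided estimate \eqref{est-rho} gives $m(s)\asymp s^{-k}e^{2\phi_k(s)}$ up to the constants $C_1^{\pm2}$, and since $\phi_k'(s)=s^{-k}$ one has the exact primitive $s^{-k}e^{2\phi_k(s)}=\tfrac12\frac{d}{ds}e^{2\phi_k(s)}$. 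Hence $\int_1^t m(s)\,ds$ is comparable to $\tfrac12 e^{2\phi_k(t)}$ while $m(t)\asymp t^{-k}e^{2\phi_k(t)}$, so the ratio $m(t)^{-1}\int_1^t m\,ds$ is bounded below by a fixed multiple of $t^{k}$ once $t$ is large enough that $e^{2\phi_k(1)}\le\tfrac12 e^{2\phi_k(t)}$; that threshold (intersected with $t>2$) defines $T_0=T_0(k,\mu)$, and the claim $\mathcal{U}(t)\ge C_{\mathcal{U}}\e\,t^{k}$ follows.

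The only genuinely delicate step is this asymptotic matching. One must carry the $C_1^{\pm2}$ factors from \eqref{est-rho} carefully on both sides, using the lower bound for $m(s)$ inside the integral and the upper bound for $m(t)$ in the denominator, so that they end up in the constant $C_{\mathcal{U}}$ rather than degrading the threshold $T_0$. Recognizing that $s^{-k}e^{2\phi_k(s)}$ is, up to the factor $\tfrac12$, an exact derivative is precisely what makes the integral explicit and reproduces the exact power $t^{k}$ stated in the lemma; everything else is bookkeeping built on the weak formulation and Lemma \ref{lem-supp}.
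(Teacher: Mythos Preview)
Your proposal is correct and follows essentially the same route as the paper: test \eqref{energysol2} with $\psi$, reduce to the first-order inequality $\mathcal{U}'+(\mu/t-2\rho'/\rho)\mathcal{U}\ge \e\,C(f,g)$, integrate with the factor $t^{\mu}/\rho^{2}(t)$, and then exploit \eqref{est-rho} together with $\phi_k'(s)=s^{-k}$ to extract the $t^{k}$ growth. The only cosmetic difference is that the paper restricts the $s$-integral to $[t/2,t]$ before applying the Bessel asymptotics, whereas you keep the full range $[1,t]$ and absorb the contribution near $s=1$ via $e^{2\phi_k(1)}\le\tfrac12 e^{2\phi_k(t)}$; both choices lead to the same bound.
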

\begin{proof} 
Let $ t \in (1,T)$.  Substituting in \eqref{energysol2}  $\Phi(x, t)$ by $\psi(x, t)$,
 we obtain
\begin{equation}
\begin{array}{l}\label{eq5-12}
\d \int_{\R^N}\big[u_t(x,t)\psi(x,t)- u(x,t)\psi_t(x,t)+\frac{\mu}{t}u(x,t) \psi(x,t)\big] dx \\
\hspace{2cm}\d =\int_{1}^{t}\int_{\R^N}|u_t(x,s)|^p\psi(x,s)dx \, ds + \e C(f,g),
\end{array}
\end{equation}
where 
\begin{equation}\label{Cfg}
C(f,g):=\rho(1)\int_{\R^N}\big[\big(\mu-\frac{\rho'(1)}{\rho(1)}\big)f(x)+g(x)\big]\phi(x)dx.
\end{equation}
Note that $C(f,g)$ is positive thanks to the fact that $\rho(1)$ and $\mu-\frac{\rho'(1)}{\rho(1)}$ are positive as well (in view of \eqref{lambda'lambda2})  and the sign of the initial data. 
Hence, recall the definition of $\mathcal{U}$, as in \eqref{F1def},  and \eqref{test11},  \eqref{eq5-12} gives
\begin{equation}
\begin{array}{l}\label{eq6}
\d \mathcal{U}'(t)+\Gamma(t)\mathcal{U}(t)=\int_{1}^{t}\int_{\R^N}|u_t(x,s)|^p\psi(x,s)dx \, ds +\e \, C(f,g),
\end{array}
\end{equation}
where 
\begin{equation}\label{gamma}
\Gamma(t):=\frac{\mu}{t}-2\frac{\rho'(t)}{\rho(t)}.
\end{equation}
Neglecting the nonlinear term in \eqref{eq6}, then multiplying the resulting equation from \eqref{eq6} by $\frac{t^{\mu}}{\rho^2(t)}$ and integrating on $(1,t)$, we get
\begin{align}\label{est-G1}
 \mathcal{U}(t)
\ge \mathcal{U}(1)\frac{\rho^2(t)}{t^{\mu}\rho^2(1)}+{\e}C(f,g)\frac{\rho^2(t)}{t^{\mu}}\int_1^t\frac{s^{\mu}}{\rho^2(s)}ds.
\end{align}
From \eqref{lmabdaK}, the definition of  $\phi_k(t)$, given by \eqref{xi}, and using the fact that $\mathcal{U}(1)>0$, the estimate \eqref{est-G1} implies that
\begin{align}\label{est-G1-1}
 \mathcal{U}(t)
\ge {\e}C(f,g) t K^2_{\frac{\mu-1}{2(1-k)}}\left(\phi_k(t)\right)\int^t_{t/2}\frac{1}{sK^2_{\frac{\mu-1}{2(1-k)}}\left(\phi_k(s)\right)}ds, \quad \forall \ t \ge 2.
\end{align}
In view of \eqref{Kmu}, we deduce the existence of $T_0=T_0(k,\mu)>2$ such that 
\begin{align}\label{est-double}
\phi_k(t)K^2_{\frac{\mu-1}{2(1-k)}}(\phi_k(t))>\frac{\pi}{4} e^{-2\phi_k(t)} \quad \text{and}  \quad \phi_k(t)^{-1}K^{-2}_{\frac{\mu-1}{2(1-k)}}(\phi_k(t))>\frac{1}{\pi} e^{2\phi_k(t)}, \ \forall \ t \ge T_0/2.
\end{align}
Inserting \eqref{est-double} in  \eqref{est-G1-1} and using \eqref{xi},   we obtain that
\begin{align}\label{est-U-2}
 \mathcal{U}(t)
&\ge \e \frac{C(f,g)}{4}t^{k}e^{-2\phi_k(t)}\int^t_{t/2}\phi_k'(s)e^{2\phi_k(s)}ds\\
&\ge  \e \frac{C(f,g)}{8}t^{k}[1-e^{-2(\phi_k(t)-\phi_k(t/2))}], \ \forall \ t \ge T_0.\nonumber
\end{align}
Thanks to \eqref{xi} and the fact that $k <1$, we observe that $t \mapsto 1-e^{-2(\phi_k(t)-\phi_k(t/2))}$ is an increasing function on $(T_0, \infty)$, hence, its minimum is achieved at $t=T_0$. Therefore  we deduce that
\begin{align}\label{est-G1-3}
 \mathcal{U}(t)
\ge \e \kappa C(f,g)t^{k}, \ \forall \ t \ge T_0,
\end{align}
where $$\d \kappa:=\frac{1}{8} \left(1-\exp \left(-\frac{(2-2^{k})T_0^{1-k}}{1-k}\right)\right).$$

Hence, Lemma \ref{F1} is now proved.
\end{proof}

The next lemma gives the lower bound of the functional $\mathcal{V}(t)$.
\begin{lem}\label{F11}
Assume that  the initial data are as in Theorem \ref{th_u_t}. For $u$  an energy solution of  \eqref{T-sys},  there exists $T_1=T_1(k,\mu)>T_0$ such that 
\begin{equation}
\label{F2postive}
\mathcal{V}(t)\ge C_{\mathcal{V}}\, \e, 
\quad\text{for all}\ t  \ge  T_1,
\end{equation}
where $C_{\mathcal{V}}$ is a positive constant depending on $f$, $g$, $N,\mu,R$ and $k$,  but not on $\e$.
\end{lem}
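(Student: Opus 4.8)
The plan is to derive for $\mathcal{V}$ a first-order differential identity in the spirit of \eqref{eq6} and then to feed in the positivity of $\mathcal{U}$ from Lemma \ref{F1}. First I would take $\Phi(x,s)=\psi(x,s)=\varphi(x)\rho(s)$ as test function in the weak formulation \eqref{energysol2-21}. Integrating by parts in space and using $\Delta\varphi=\varphi$, so that $\Delta\psi=\psi$ (cf. \eqref{test11}, \eqref{lambda-eq}), the spatial term becomes $-\int_1^t s^{\M}\mathcal{U}(s)\,ds$, while $\int_{\R^N} u_t\psi_s\,dx=\frac{\rho'(s)}{\rho(s)}\mathcal{V}(s)$ and $\int_{\R^N}\frac{\mu}{s}u_t\psi\,dx=\frac{\mu}{s}\mathcal{V}(s)$. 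Each term of the resulting identity is absolutely continuous in $t$ (recall $u_t\in\mathcal{C}([1,T),L^2)$ and $u_t\in L^p_{loc}$), so differentiating in $t$ gives, for a.e. $t$,
\begin{equation*}
\mathcal{V}'(t)+\Big(\frac{\mu}{t}-\frac{\rho'(t)}{\rho(t)}\Big)\mathcal{V}(t)=t^{\M}\mathcal{U}(t)+\int_{\R^N}|u_t(x,t)|^p\psi(x,t)\,dx,
\end{equation*}
where both terms on the right-hand side are nonnegative since $\mathcal{U}>0$ on $[1,\infty)$ by the proof of Lemma \ref{F1}.

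Next I would multiply by the integrating factor $t^{\mu}/\rho(t)$, which is legitimate because $\int\big(\frac{\mu}{s}-\frac{\rho'(s)}{\rho(s)}\big)ds=\mu\log s-\log\rho(s)$, to obtain
\begin{equation*}
\frac{d}{dt}\Big(\frac{t^{\mu}}{\rho(t)}\mathcal{V}(t)\Big)=\frac{t^{\mu}}{\rho(t)}\Big(t^{\M}\mathcal{U}(t)+\int_{\R^N}|u_t|^p\psi\,dx\Big)\ge \frac{t^{\mu}}{\rho(t)}\,t^{\M}\mathcal{U}(t).
\end{equation*}
Integrating from $1$ to $t$ and discarding the nonnegative contributions of the nonlinear term, of the boundary term $\mathcal{V}(1)=\e\rho(1)\int_{\R^N} g\varphi\,dx\ge 0$ (as $g\ge 0$), and of the integral over $[1,T_0]$ (where $\mathcal{U}>0$), I would retain only the range $[T_0,t]$ and invoke the bound \eqref{F1postive}, namely $\mathcal{U}(s)\ge C_{\mathcal{U}}\,\e\, s^{k}$. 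Since $\M+k=-k$, this yields
\begin{equation*}
\frac{t^{\mu}}{\rho(t)}\mathcal{V}(t)\ge C_{\mathcal{U}}\,\e\int_{T_0}^t\frac{s^{\mu-k}}{\rho(s)}\,ds,\qquad t\ge T_0.
\end{equation*}

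Finally I would estimate the remaining integral with the two-sided bound \eqref{est-rho}. Restricting to $[t/2,t]$ and writing $s^{-k}e^{\phi_k(s)}=\frac{d}{ds}e^{\phi_k(s)}$ (recall $\phi_k'(s)=s^{-k}$ from \eqref{xi}), the lower bound in \eqref{est-rho} gives $\frac{s^{\mu-k}}{\rho(s)}\ge C\,s^{\frac{\mu-k}{2}}\frac{d}{ds}e^{\phi_k(s)}$, whence, bounding $s^{\frac{\mu-k}{2}}\ge C\,t^{\frac{\mu-k}{2}}$ on $[t/2,t]$,
\begin{equation*}
\int_{T_0}^t\frac{s^{\mu-k}}{\rho(s)}\,ds\ge C\,t^{\frac{\mu-k}{2}}\big(e^{\phi_k(t)}-e^{\phi_k(t/2)}\big)\ge \tfrac{C}{2}\,t^{\frac{\mu-k}{2}}e^{\phi_k(t)},
\end{equation*}
the last step holding once $\phi_k(t)-\phi_k(t/2)$ is large, which determines the threshold $T_1>T_0$. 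Multiplying back by $\rho(t)/t^{\mu}$ and using the lower bound for $\rho(t)$ in \eqref{est-rho} once more, the exponentials cancel and the net power of $t$ is $\frac{k+\mu}{2}-\mu+\frac{\mu-k}{2}=0$, leaving $\mathcal{V}(t)\ge C_{\mathcal{V}}\,\e$ for all $t\ge T_1$. The delicate point is precisely this bookkeeping: the $\rho$-asymptotics from Lemma \ref{lem-supp} and the gain $s^{k}$ coming from \eqref{F1postive} must combine so that all powers of $t$ and all exponential factors cancel \emph{exactly}, producing a genuine positive constant lower bound rather than a decaying one; the justification of the differentiation step at the energy-solution level is routine by comparison.
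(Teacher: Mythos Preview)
Your proof is correct and follows essentially the same route as the paper: you derive the identity $\mathcal{V}'(t)+\big(\tfrac{\mu}{t}-\tfrac{\rho'(t)}{\rho(t)}\big)\mathcal{V}(t)=t^{-2k}\mathcal{U}(t)+\int_{\R^N}|u_t|^p\psi\,dx$, apply the integrating factor $t^{\mu}/\rho(t)$, integrate, insert \eqref{F1postive}, restrict to $[t/2,t]$, and use the $\rho$-asymptotics to make the powers and exponentials cancel. The only cosmetic differences are that the paper obtains the ODE by differentiating \eqref{eq5bis} (via \eqref{lambda} and \eqref{def23}) rather than directly from \eqref{energysol2-21}, and it invokes the Bessel estimates \eqref{est-double-1} instead of the equivalent \eqref{est-rho}; note also that your restriction to $[t/2,t]$ implicitly forces $T_1\ge 2T_0$, which is exactly the paper's choice.
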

 
\begin{proof}
Let $t \in [1,T)$. Recall the definitions of $\mathcal{U}$ and  $\mathcal{V}$, given respectively by \eqref{F1def} and  \eqref{F2def}, \eqref{test11} and the identity
 \begin{equation}\label{def23}\d \mathcal{U}'(t) -\frac{\rho'(t)}{\rho(t)}\mathcal{U}(t)= \mathcal{V}(t).\end{equation}
Hence, the equation  \eqref{eq6} yields
\begin{equation}
\begin{array}{l}\label{eq5bis}
\d \mathcal{V}(t)+\left[\frac{\mu}{t}-\frac{\rho'(t)}{\rho(t)}\right]\mathcal{U}(t)
=\d \int_{1}^{t}\int_{\R^N}|u_t(x,s)|^p\psi(x,s)dx \, ds +\e \, C(f,g).
\end{array}
\end{equation}
A differentiation  in time of the  equation \eqref{eq5bis} gives
\begin{align}\label{F1+bis}
\d \mathcal{V}'(t)+\left[\frac{\mu}{t}-\frac{\rho'(t)}{\rho(t)}\right]\mathcal{U}'(t)-\left(\frac{\mu}{t^2}+\frac{\rho''(t)\rho(t)-(\rho'(t))^2}{\rho^2(t)}\right)\mathcal{U}(t) 
\d =\int_{\R^N}|u_t(x,t)|^p\psi(x,t)dx.
\end{align}
Now, thanks to  \eqref{lambda} and   \eqref{def23}, we deduce from  \eqref{F1+bis} that
\begin{align}\label{F1+bis2}
\d \mathcal{V}'(t)+\left[\frac{\mu}{t}-\frac{\rho'(t)}{\rho(t)}\right]\mathcal{V}(t) =  t^{\M}\mathcal{U}(t)+\int_{\R^N}|u_t(x,t)|^p\psi(x,t)dx,
\end{align}
that we rewrite as
\begin{align}\label{G2++}
\d \left(t^{\mu}\frac{\mathcal{V}(t)}{\rho(t)}\right)' =\frac{t^{\mu}}{\rho(t)} \left( t^{\M}\mathcal{U}(t)+\int_{\R^N}|u_t(x,t)|^p\psi(x,t)dx\right), \quad \forall \ t \ge 1.
\end{align}
An integration of \eqref{G2++} over $(1,t)$ implies that
\begin{align}\label{G23+}
\d t^{\mu}\frac{\mathcal{V}(t)}{\rho(t)} = \frac{\mathcal{V}(1)}{\rho(1)}+\int_1^t\frac{s^{\mu}}{\rho(s)} \left( s^{\M}\mathcal{U}(s)+\int_{\R^N}|u_t(x,s)|^p\psi(x,s)dx\right) ds, \quad \forall \ t \ge 1.
\end{align}
Thanks to the fact that $\mathcal{V}(1) \ge 0$, $\rho(1) >0$ and using the lower bound of $\mathcal{U}$ as  in \eqref{F1postive}, we infer that 
\begin{align}\label{G24+}
\d \mathcal{V}(t) \ge \frac{\rho(t)}{t^{\mu}}\int_1^t\frac{s^{\mu}}{\rho(s)} \left( C_{\mathcal{U}} \ep s^{-k}+\int_{\R^N}|u_t(x,s)|^p\psi(x,s)dx\right) ds, \quad \forall \ t \ge T_0.
\end{align}
Therefore the estimate \eqref{G24+} gives
\begin{align}\label{est-G111-3}
 \mathcal{V}(t)
\ge C_{\mathcal{U}}\,{\e}\frac{\rho(t)}{t^{\mu}}\int_{t/2}^t\frac{s^{\m+\mu}}{\rho(s)}ds, \quad \forall \ t  \ge  2T_0.
\end{align}
For convenience, we rewrite \eqref{est-double} as follows:
\begin{align}\label{est-double-1}
\sqrt{\phi_k(t)}K_{\frac{\mu-1}{2(1-k)}}(\phi_k(t))>\frac{\sqrt{\pi}}{2} e^{-\phi_k(t)} \quad \text{and}  \quad \frac{1}{\sqrt{\phi_k(t)}} K^{-1}_{\frac{\mu-1}{2(1-k)}}(\phi_k(t))>\frac{1}{\sqrt{\pi}} e^{\phi_k(t)}, \ \forall \ t \ge T_0/2.
\end{align}
Using  the expressions of $\rho(t)$ and $\phi_k(t)$, given respectively by \eqref{lmabdaK} and \eqref{xi}, we deduce that
\begin{align}\label{est-G1-2}
 \mathcal{V}(t)
&\d \ge \e \, C_{\mathcal{U}}\left(\frac{1}{2}\right)^{\frac{\mu}{2}+1}e^{-\phi_k(t)}\int^t_{t/2}\phi_k'(s)e^{\phi_k(s)}ds\\
&\d \ge \e \, C_{\mathcal{U}}\left(\frac{1}{2}\right)^{\frac{\mu}{2}+1} [1-e^{-(\phi_k(t)-\phi_k(t/2))}], \ \forall \ t \ge 2T_0.\nonumber
\end{align}
Analogously as in Lemma \ref{F1},  we have
\begin{align}\label{est-G1-2 bis}
 \mathcal{V}(t)
\ge  C_{\mathcal{V}}\,{\e}, \quad \forall \ t \ge T_1:= 2T_0,
\end{align}
where
$$\d C_{\mathcal{V}}:= C_{\mathcal{U}}\left(\frac{1}{2}\right)^{\frac{\mu}{2}+1}\left(1-\exp \left(-\frac{(1-2^{k-1})(2T_0)^{1-k}}{1-k}\right)\right).$$

 This completes the proof of Lemma
\ref{F11}.
\end{proof}

\section{Proof of Theorem \ref{th_u_t}.}\label{sec-ut}

This section is dedicated to proving the main result in  Theorem \ref{th_u_t} which exposes the blow-up dynamics of the solution of \eqref{T-sys}. Hence,  to prove the blow-up result for \eqref{T-sys}  we will use  \eqref{eq5bis} and \eqref{F1+bis2}. For this purpose, we multiply   \eqref{eq5bis} by $\alpha \frac{\rho'(t)}{\rho(t)}$, and subtract the resulting equation from   \eqref{F1+bis2}. Therefore we obtain for a certain $\alpha\ge0$, whose range will be fixed afterward, 
\begin{equation}\label{G2+bis55}
\begin{array}{c}
\d \mathcal{V}'(t)+\left[\frac{\mu}{t}-(1+\alpha) \frac{\rho'(t)}{\rho(t)}\right] \mathcal{V}(t)= -\e \alpha \frac{\rho'(t)}{\rho(t)}\  C(f,g) 
+\d 
\left[t^{-2k} +
\alpha \frac{\rho'(t)}{\rho(t)}\
\d  \left(\frac{\mu}{t}-\frac{\rho'(t)}{\rho(t)}\right)\right]\mathcal{U}(t) \vspace{.2cm}\\
\d +
\int_{\R^N}|u_t(x,t)|^p\psi(x,t)dx -\d \alpha \frac{\rho'(t)}{\rho(t)}\ \d \int_1^t \int_{\R^N}|u_t(x,s)|^p\psi(x,s)dx  ds, \quad \forall \ t  \ge  1.
\end{array}
\end{equation}
Using   \eqref{lambda'lambda1}, we can choose   $\tilde{T}_2\ge T_1$ ($T_1$ is given in Lemma \ref{F11}) such that 
\begin{equation}\label{G2+bis555}
\begin{array}{c}
\d \mathcal{V}'(t)+\left[\frac{\mu}{t}-(1+\alpha) \frac{\rho'(t)}{\rho(t)}\right] \mathcal{V}(t)\ge \frac{\e \alpha t^{-k}}2 \  C(f,g) 
+\d (1-4\alpha)t^{-2k} 
\mathcal{U}(t) \vspace{.2cm}\\
\d +\int_{\R^N}|u_t(x,t)|^p\psi(x,t)dx +\d \frac{\alpha t^{-k}}2  \d \int_1^t \int_{\R^N}|u_t(x,s)|^p\psi(x,s)dx  ds, \quad \forall \ t  \ge  \tilde{T}_2.
\end{array}
\end{equation}
From now on the parameter $\alpha$ is chosen in $(1/7,1/4)$. Thanks to \eqref{F1postive}, the 
 estimate \eqref{G2+bis555} leads to the following lower bound: 
\begin{equation}\label{G2+bis555bb}
\begin{array}{c}
\d \mathcal{V}'(t)+\left[\frac{\mu}{t}-(1+\alpha) \frac{\rho'(t)}{\rho(t)}\right] \mathcal{V}(t)\ge \frac{\e \alpha t^{-k}}2 \  C(f,g) 
+\int_{\R^N}|u_t(x,t)|^p\psi(x,t)dx \vspace{.2cm}\\
\d +\frac{\alpha t^{-k}}2  \d \int_1^t \int_{\R^N}|u_t(x,s)|^p\psi(x,s)dx  ds, \quad \forall \ t  \ge  \tilde{T}_2.
\end{array}
\end{equation}
Now, we introduce the following functional:
\[
H(t):=
C_2 \, \e+\frac1{16}
\int_{\tilde{T}_3}^t \int_{\R^N}|u_t(x,s)|^p\psi(x,s)dx
 ds,
\]
where $C_2:=\min(\alpha C(f,g)/4(1+\alpha) ,C_{\mathcal{V}})$ ($C_{\mathcal{V}}$ is given by Lemma \ref{F11}) and we choose
$\tilde{T}_3>\tilde{T}_2$  such that
\begin{equation}\label{Cond-T3}
\frac{ \alpha }2 \  C(f,g) 
-C_2 t^k\left(\frac{\mu}{t}-(1+\alpha) \frac{\rho'(t)}{\rho(t)}\right)\ge 0,
\end{equation}
and
\begin{equation}\label{Cond-T3-bis}
\frac{\alpha}{2}-
\frac1{16} t^k\left(\frac{\mu}{t}-(1+\alpha) \frac{\rho'(t)}{\rho(t)}\right)
 \ge 0,
 \end{equation}
 for all $t \ge\tilde{T}_3$ (this is possible thanks to \eqref{lambda'lambda1}, the definition of $C_2$  and the fact that $\alpha \in (1/7,1/4)$). \\
Let
$$\mathcal{F}(t):=\mathcal{V}(t)-H(t),$$
which satisfies
\begin{equation}\label{G2+bis6}
\begin{array}{l}
\d \mathcal{F}'(t)+\left[\frac{\mu}{t}-(1+\alpha) \frac{\rho'(t)}{\rho(t)}\right] \mathcal{F}(t)\ge \frac{15}{16} \int_{\R^N}|u_t(x,t)|^p\psi(x,t)dx \vspace{.2cm}\\
\hspace{1cm}+ \d \left[\frac{\alpha}{2}-
\frac1{16} \left(\frac{\mu}{t^{1-k}}-(1+\alpha) \frac{t^k\rho'(t)}{\rho(t)}\right)
\right]  t^{\m}\int_{\tilde{T}_3}^t  \int_{\R^N}|u_t(x,s)|^p\psi(x,s)dx ds\vspace{.2cm}\\ 
\hspace{1cm}+\d \left[ \frac{ \alpha }2 \  C(f,g) 
-C_2\left(\frac{\mu}{t^{1-k}}-(1+\alpha) \frac{t^k\rho'(t)}{\rho(t)}\right)
\right] \e t^{-k},  \quad \forall \ t \ge \tilde{T}_3.
\end{array}
\end{equation}
Thanks to \eqref{Cond-T3} and \eqref{Cond-T3-bis}, we easily conclude that
\begin{equation}\label{G2+bis7}
\d \mathcal{F}'(t)+\left[\frac{\mu}{t}-(1+\alpha) \frac{\rho'(t)}{\rho(t)}\right] \mathcal{F}(t) \ge 0,  \quad \forall \ t \ge \tilde{T}_3.
\end{equation}
Multiplying  \eqref{G2+bis7} by $\frac{t^{ \mu}}{\rho^{1+\alpha}(t)}$ and integrating over $(\tilde{T}_3,t)$, we get
\begin{align}\label{est-G111}
 \mathcal{F}(t)
\ge \mathcal{F}(\tilde{T}_3)\frac{\tilde{T}_3^{\mu}\rho^{1+\alpha}(t)}{t^{ \mu}\rho^{1+\alpha}(\tilde{T}_3)}, \quad \forall \ t \ge \tilde{T}_3.
\end{align}
Hence, we see that $\d \mathcal{F}(\tilde{T}_3)=\mathcal{V}(\tilde{T}_3)-C_2 \e \ge \mathcal{V}(\tilde{T}_3)-C_{\mathcal{V}}\e \ge 0$ in view of Lemma \ref{F11} and the definition of $C_2$ implying that
 $C_2\le C_{\mathcal{V}}$. \\
Therefore we deduce that 
\begin{equation}
\label{G2-est}
\mathcal{V}(t)\geq H(t), \quad \forall \ t \ge \tilde{T}_3.
\end{equation}
Now, employing the H\"{o}lder inequality and the estimates \eqref{psi} and \eqref{F2def}, we obtain
\begin{equation}
\begin{array}{rcl}
\d H'(t) &\geq&\d \frac{1}{16} \mathcal{V}^p(t)\left(\int_{|x|\leq \phi_k(t)+R}\psi(x,t)dx\right)^{-(p-1)} \vspace{.2cm}\\ &\geq& C \mathcal{V}^p(t) \rho^{-(p-1)}(t)e^{-(p-1)\phi_k(t)}(\phi_k(t))^{-\frac{(N-1)(p-1)}2}.
\end{array}
\end{equation}
In view of \eqref{est-rho}, we see that  
\begin{equation}
\d H'(t) \geq C \mathcal{V}^p(t) t^{-\frac{\left[(N-1)(1-k)+k+\mu\right](p-1)}{2}}, \ \forall \ t \ge \tilde{T}_3.
\end{equation}
From the above estimate and \eqref{G2-est}, we have
\begin{equation}
\label{inequalityfornonlinearin}
H'(t)\geq C H^p(t) t^{-\frac{\left[(N-1)(1-k)+k+\mu\right](p-1)}{2}}, \quad \forall \ t \ge \tilde{T}_3.
\end{equation}
Since $H(\tilde{T}_3)=C_2 \e>0$, 
we  easily obtain the  blow-up in finite time for the functional $H(t)$, and consequently the one for $\mathcal{V}(t)$ due to \eqref{G2-est}. 

The proof of Theorem \ref{th_u_t} is now achieved.

\section*{aknowledgments}
The authors are deeply thankful to the anonymous reviewer for the valuable remarks that improved the paper. 
A. Palmieri is supported by the Japan Society for the Promotion of Science (JSPS) – JSPS Postdoctoral Fellowship for Research in Japan (Short-term) (PE20003).


\bibliographystyle{plain}

\end{document}